\documentclass{article}
\usepackage[utf8]{inputenc}
\usepackage{amsmath,amsthm,fullpage,graphicx}
\usepackage{hyperref}
\usepackage{listings}
\usepackage{authblk}
\usepackage{url}
\usepackage{pdflscape}
\newtheorem{theorem}{Theorem}

\newtheorem{corollary}{Corollary}

\newtheorem{definition}{Definition}
\newtheorem{conjecture}{Conjecture}

\title{Pandigital and penholodigital numbers}
\author{Chai Wah Wu}
\affil{IBM Research\\IBM T. J. Watson Research Center, Yorktown Heights, NY, USA\thanks{cwwu@us.ibm.com}}

\date{March 29, 2024\\Latest update: March 13, 2025}

\begin{document}

\maketitle

\begin{abstract} Pandigital and penholodigital numbers are numbers that contain every digit or nonzero digit respectively.  We study properties of pandigital or penholodigital numbers that are also square, oblong or prime.
\end{abstract}

\section{Introduction}
Pandigital \cite{wiki:Pandigital_number} and penholodigital numbers are defined as numbers that contain every digit or every nonzero digit, respectively. More precisely,
\begin{definition}
A number $n$ is a {\em pandigital} number in base $b$ if $n$ expressed as a base $b$ number contains each of the $b$ different digits at least once. A number $n$ is a {\em strict pandigital} number in base $b$ if $n$ expressed as a base $b$ number contains each of the $b$ different digits exactly once.
\end{definition}
A strict pandigital number is pandigital and clearly there are a finite number of strict pandigital numbers for each base $b$. 
\begin{definition}
A number $n$ is a {\em penholodigital} number in base $b$ if $n$ expressed as a base $b$ number does not contain the zero digit\footnote{Such numbers are also called zeroless numbers.} and contains each of the $b-1$ different nonzero digits at least once. A number $n$ is a {\em strict penholodigital number} in base $b$ if $n$ expressed as a base $b$ number does not contain the zero digit and contains each of the $b-1$ different nonzero digits exactly once.
\end{definition}

For example, in base $10$, $1023456798$ is a strict pandigital number and $10023546789$ is a pandigital number.
Similarly, $123456798$ is a strict penholodigital number and $1323546789$ is a penholodigital number in base $10$.

Many of these numbers are listed as sequences in the On-line Encyclopedia of Integer Sequences (OEIS) \cite{oeis}.
Examples of pandigital and strict pandigital numbers in base $10$ are listed in OEIS sequences  \href{https://oeis.org/A171102}{A171102} and  \href{https://oeis.org/A050278}{A050278} respectively. Examples of penholodigital numbers in base $10$ are listed in OEIS sequence  \href{https://oeis.org/A050289}{A050289}.
The smallest and largest strict pandigital numbers in base $b$ are listed in OEIS sequences  \href{https://oeis.org/A049363}{A049363} and  \href{https://oeis.org/A062813}{A062813} respectively.
The smallest and largest strict penholodigital numbers in base $b$ are listed in OEIS sequences  \href{https://oeis.org/A023811}{A023811} and  \href{https://oeis.org/A051846}{A051846} respectively.

\section{Pandigital and penholodigital squares}

Let $s_b(n)$ be the sum of the digits of $n$ in base $b$. Since $b\equiv 1 \pmod{b-1}$ this means that $b^k \equiv 1 \pmod{b-1}$, which in turn implies that $s_b(n)\equiv n \pmod
{b-1}$.

Note that for a strict pandigital or a strict penholodigital number $n$,
$s_b(n) = b(b-1)/2$.
This implies directly the following:

\begin{theorem} \label{thm:squares}
Let $A_b$ be the set of modular square roots of $b(b-1)/2$ modulo $b-1$, i.e. it is the set of integers $0\leq m < b-1$ such that $m^2 \equiv b(b-1)/2 \pmod{b-1}$.
If $n^2$ is a strict pandigital or a strict penholodigital square, then $n\equiv m \pmod{b-1}$ for some $m\in A_b$. 
\end{theorem}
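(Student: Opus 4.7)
The plan is to chain together the two congruences that the paper has just set up. If $n^2$ is a strict pandigital or strict penholodigital number in base $b$, then by the observation preceding the theorem, its base-$b$ digit sum satisfies $s_b(n^2) = b(b-1)/2$. Combined with the general identity $s_b(N) \equiv N \pmod{b-1}$ applied to $N = n^2$, this immediately gives $n^2 \equiv b(b-1)/2 \pmod{b-1}$.

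From here I would just reduce $n$ modulo $b-1$. Write $n \equiv m \pmod{b-1}$ with $0 \leq m < b-1$; then $m^2 \equiv n^2 \equiv b(b-1)/2 \pmod{b-1}$, so by definition $m \in A_b$. This is exactly the claimed conclusion.

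There is no real obstacle: both ingredients (the digit sum formula for strict pandigital/penholodigital numbers and the congruence $s_b(N) \equiv N \pmod{b-1}$) are already supplied in the text, and the argument is a two-line reduction. The only thing worth flagging in the write-up is that $A_b$ is nonempty whenever a strict pandigital or penholodigital square exists, which follows automatically from the construction of $m$ above. I would keep the proof to three or four sentences.
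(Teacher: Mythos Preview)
Your proposal is correct and matches the paper's own reasoning: the paper does not even write out a separate proof, stating only that the theorem follows directly from the two observations $s_b(N)\equiv N\pmod{b-1}$ and $s_b(n^2)=b(b-1)/2$, which is precisely the chain you describe. Your added remark that $A_b$ is automatically nonempty whenever such a square exists is a harmless elaboration.
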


For instance, $A_{10} = \{0,3,6\}$. Thus if $n^2$ is a strict pandigital or a strict penholodigital square in base $10$, then $n\equiv 0 \pmod{3}$. In general, finding square roots modulo $m$ is a difficult problem, as difficult as factoring $m$ \cite{granville:gauss}, but for some values of $b$, we can explicitly find $A_b$.

\begin{theorem} \label{thm:setA}
$b$ is odd and $b-1$ has an even $2$-adic valuation\footnote{The $2$-adic valuation of $n$ is the largest power of $2$ that divides $n$.} if and only if $A_b = \emptyset$. If $b$ is an even squarefree number, then $A_b = \{0\}$. If $b$ is an odd squarefree number, then $A_b=\{(b-1)/2\}$.
\end{theorem}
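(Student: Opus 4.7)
The plan is to reduce the right-hand side $b(b-1)/2$ modulo $b-1$, and then apply the Chinese Remainder Theorem on the factorization $b-1 = 2^k q$ with $q$ odd.

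Using the observation (already noted in the excerpt) that $b \equiv 1 \pmod{b-1}$, I would first compute $b(b-1)/2 \pmod{b-1}$: for $b$ even, $b(b-1)/2 = (b/2)(b-1) \equiv 0$; for $b$ odd, $b(b-1)/2 \equiv (b-1)/2 \pmod{b-1}$. The defining congruence of $A_b$ thus becomes $m^2 \equiv 0$ (respectively, $m^2 \equiv (b-1)/2$) modulo $b-1$.

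For the emptiness characterization, CRT splits the congruence into a mod-$q$ piece and a mod-$2^k$ piece. The mod-$q$ piece is always $m^2 \equiv 0 \pmod{q}$ (using $(b-1)/2 = 2^{k-1} q \equiv 0 \pmod{q}$ when $b$ is odd), which admits the solution $m \equiv 0$. The obstruction therefore lies entirely in the $2$-component. When $b$ is even, $k = 0$ and there is no obstruction, so $A_b \neq \emptyset$. When $b$ is odd, after absorbing the odd factor $q$ of $(b-1)/2$ as a unit mod $2^k$, the condition becomes $m^2 \equiv 2^{k-1} \pmod{2^k}$. Because $\nu_2(m^2) = 2\nu_2(m)$ is always even, solvability forces $k-1$ to be even, i.e., $k = \nu_2(b-1)$ odd; conversely, for such $k$ the element $m = 2^{(k-1)/2}$ is a solution. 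This yields $A_b = \emptyset$ iff $b$ is odd and $\nu_2(b-1)$ is even.

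For the two squarefree cases, I would use CRT once more to count solutions. When the modulus $b-1$ is squarefree, each prime $p \mid b-1$ occurs only once, so $m^2 \equiv 0 \pmod{p}$ forces $p \mid m$. In the even case ($b$ even, so $b-1$ is odd), this collapses to $b-1 \mid m$ and hence $m = 0$ in range. In the odd case, $b-1 = 2q$ with $q$ squarefree odd, and CRT combines the condition that $m$ is odd (from $m^2 \equiv 1 \pmod 2$) with $m \equiv 0 \pmod{q}$ to give the unique residue $m = q = (b-1)/2$.

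The main obstacle is the $2$-adic analysis of $m^2 \equiv 2^{k-1} \pmod{2^k}$, whose solvability alternates with the parity of $k$. The parity-of-$\nu_2$ argument is elementary, but it needs careful bookkeeping to handle the $k = 1$ base case (where the condition reduces to $m$ odd) separately from the higher-$k$ regime, and to verify that the lifted CRT solutions recombine to the explicit elements $0$ and $(b-1)/2$ claimed in the statement.
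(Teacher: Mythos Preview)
Your argument is correct and shares the paper's overall architecture: reduce $b(b-1)/2$ modulo $b-1$ to either $0$ or $(b-1)/2$, then split via CRT into an odd part (where the target is always $0$ and hence trivially a square) and a $2$-power part. The squarefree cases are handled identically in both proofs. The difference lies in the $2$-adic step. The paper treats the two directions separately: for odd valuation it exhibits the explicit witness $2^{(k-1)/2}q\in A_b$ by direct computation, while for even valuation it invokes Gauss's classification of residues modulo $2^{2k}$ (with a separate check at $k=1$). Your route is more uniform and more elementary: the observation that $\nu_2(m^2)$ is always even, compared against $\nu_2\bigl((b-1)/2\bigr)=k-1$, settles both directions at once without appealing to Gauss. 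What the paper's approach buys in exchange is a concrete element of $A_b$ rather than just nonemptiness.

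One wording issue worth cleaning up: the phrase ``absorbing the odd factor $q$ as a unit mod $2^k$'' to pass from $m^2\equiv 2^{k-1}q$ to $m^2\equiv 2^{k-1}\pmod{2^k}$ is not literally valid, since an odd $q$ need not be a square unit modulo $2^k$ (e.g.\ $q=3$ modulo $8$). Fortunately your actual argument does not use this: the $\nu_2$-parity reasoning applies directly to $m^2\equiv 2^{k-1}q\pmod{2^k}$, because $\nu_2(2^{k-1}q)=k-1$ regardless of $q$, and for the converse your proposed $m=2^{(k-1)/2}$ already satisfies $m^2-2^{k-1}q=2^{k-1}(1-q)\equiv 0\pmod{2^k}$. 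So just drop the absorption sentence and run the valuation argument on $2^{k-1}q$ directly.
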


\begin{proof}
If $b$ is even, $b(b-1)/2 \equiv 0 \pmod{b-1}$ and $0\in A_b$.
Let $b=2^{2k+1}q+1$ for some odd $q$. Then $b-1 = 2^{2k+1}q$,  $b(b-1)/2\equiv 2^{2k}q \pmod{b-1}$ and $(2^kq)^2 + \frac{1-q}{2}2^{2k+1}q = 2^{2k}q$, i.e.  $(2^kq)^2 \equiv 2^{2k}q \pmod{2^{2k+1}q}$ and thus $2^kq \in A_b$. So in both these cases $A_b\neq \emptyset$. 

Now suppose $b = 2^{2k}q+1$ for some $k>0$ and odd $q$. First note that $b-1 = 2^{2k}q$ and $2^{2k}$ and $q$ are coprime. Since $b-1$ is even, $b(b-1)/2\equiv (b-1)/2 \pmod{b-1}$. Let $r = (b-1)/2 = 2^{2k-1}q$. If $k=1$, $b-1 = 4q$, $r = 2q$ and thus $r\equiv 2 \pmod{4}$ is a quadratic nonresidue modulo $4$. By the Chinese Remainder Theorem, $r$ is a quadratic nonresidue modulo $b-1$.
If $k>1$, Gauss showed \cite{Gauss1801,granville:gauss} that a nonzero number $r$ is a residue modulo $2^{2k}$ if and only if $r$ is of the form $2^{2m}(8j+1)$. Since $r = 2^{2k-1}q$, $r$ is a quadratic nonresidue modulo $2^{2k}$. Thus if $b$ is odd and $b-1$ has an even $2$-adic valuation, then $A_b=\emptyset$.

Next, suppose $b-1$ is odd and squarefree.  Then $b$ is even, $b-1$ divides $b(b-1)/2$ and $m^2 \equiv 0 \pmod{b-1}$. Since $b-1 = \prod_i p_i$ for distinct odd primes $p_i$, and $m^2 \equiv 0 \pmod{p_i}$ if and only if $m \equiv 0 \pmod{p_i}$, this implies that $m \equiv 0 \pmod{b-1}$ by the Chinese Remainder Theorem, i.e. $A_b = \{0\}$.
Similarly, if $b-1$ is even and squarefree, then $m^2 \equiv b(b-1)/2 \equiv (b-1)/2 \pmod{b-1}$ and $(b-1)/2 = \prod_i p_i$ for distinct odd primes $p_i$, i.e., $m^2 \equiv 0 \pmod{p_i}$ and $m^2 \equiv 1 \pmod{2}$. This implies that $m \equiv 0 \pmod{p_i}$ and $m \equiv 1 \pmod{2}$. Again by the Chinese Remainder Theorem, $m \equiv (b-1)/2 \pmod{b-1}$ and $A_b=\{(b-1)/2\}$.
\end{proof}

Theorems \ref{thm:squares} and \ref{thm:setA} result in the following immediate consequences.

\begin{corollary} \label{cor:squares}
If $b$ is odd and $b-1$ has an even $2$-adic valuation, then there are no strict pandigital nor strict penholodigital squares in base $b$.
\end{corollary}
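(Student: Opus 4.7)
The plan is to derive the corollary directly by chaining together Theorems \ref{thm:squares} and \ref{thm:setA}; no further number-theoretic work is needed, and there is no real obstacle beyond recording the logical implication.

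First I would invoke Theorem \ref{thm:setA}: under the hypothesis that $b$ is odd and $b-1$ has an even $2$-adic valuation, the theorem tells us that $A_b = \emptyset$. Next I would suppose, toward a contradiction, that there exists an integer $n$ such that $n^2$ is a strict pandigital or strict penholodigital square in base $b$. Theorem \ref{thm:squares} then forces $n \equiv m \pmod{b-1}$ for some $m \in A_b$. Since $A_b$ is empty, no such $m$ exists, which is the desired contradiction, so no such $n^2$ can exist.

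The entire proof is one paragraph of logical bookkeeping; the substantive content sits in Theorem \ref{thm:setA}, whose quadratic-residue analysis modulo $2^{2k}$ (via Gauss) has already been carried out. Thus the only thing the writeup needs to make explicit is that the emptiness of $A_b$ provided by Theorem \ref{thm:setA} is incompatible with the congruence condition imposed by Theorem \ref{thm:squares}.
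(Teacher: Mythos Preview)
Your proposal is correct and mirrors the paper's approach exactly: the paper states that the corollary is an immediate consequence of Theorems~\ref{thm:squares} and~\ref{thm:setA} and does not even write out the chaining you spell out. Your contradiction argument is precisely the intended (trivial) deduction.
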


Corollary \ref{cor:squares} for the case of pandigital squares was also shown in \cite{partridge:pandigital:2015} directly using a different technique. 
However, as we will see in Section \ref{sec:oblong}, the approach in this section by means of the set $A_b$ allows us to extend this result easily to other numbers beyond squares, such as oblong numbers. We conjecture the following:

\begin{conjecture}
Suppose $b>4$.
A strict pandigital square and a strict penholodigital square in base $b$ exists
if and only if $b$ is even or $b-1$ has an odd $2$-adic valuation.
\end{conjecture}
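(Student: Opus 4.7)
The ``only if'' direction is immediate from Corollary~\ref{cor:squares} (contrapositive), so the content of the conjecture is the ``if'' direction: whenever $b>4$ with $b$ even or $v_2(b-1)$ odd, both a strict pandigital and a strict penholodigital square exist in base $b$. My plan would be to split on the size of $b$ and attack the two regimes differently.

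For small admissible $b$ (up to some explicit bound), I would verify existence by direct enumeration. Theorem~\ref{thm:squares} restricts the candidate square roots to a union of a few arithmetic progressions modulo $b-1$ (in particular, by Theorem~\ref{thm:setA}, a single progression when $b-1$ is squarefree), and the relevant $n$ lie in $[b^{(b-1)/2}, b^{b/2})$ (pandigital case) or $[b^{(b-2)/2}, b^{(b-1)/2})$ (penholodigital case). For modest $b$ this search is tractable, and checking the digit-multiset of $n^2$ is a trivial test.

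For large $b$ I would attempt a density/expectation argument. The number of strict pandigital numbers in $[b^{b-1},b^b)$ is $(b-1)\cdot(b-1)!$, while the number of squares in that interval is on the order of $b^{b/2}$. If the base-$b$ digit strings of squares in such an interval were well-approximated by uniform random strings (restricted to the admissible residue class modulo $b-1$), the expected count of strict pandigital squares would be approximately $(b-1)(b-1)!\cdot b^{b/2}/(b^b - b^{b-1})$, which by Stirling grows exponentially in $b$; an analogous heuristic covers the penholodigital case. The main obstacle is turning this heuristic into an unconditional lower bound: this requires quantitative equidistribution of base-$b$ digits of squares in short intervals, essentially a normality-type statement for the sequence of squares, and such results are a well-known open problem even for a single fixed base.

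An intermediate tactic worth trying is an explicit parameterized construction. For each admissible $b$ one could fix $m \in A_b$ and let $n = n_0 + k(b-1)$ range over the allowed residue class within a short window near $b^{b/2}$; successive squares differ by $\asymp b^{b/2}$, so their high-order digits vary rapidly, and one might hope to prove a pigeonhole-style statement forcing some $n^2$ to realize every digit. I do not see how to complete such an argument in full generality, and my honest expectation is that the conjecture, while strongly supported by computation, lies beyond current techniques for general $b$; the value of the proposal is therefore (i) a proof for all $b$ up to an explicit threshold, and (ii) a reduction of the tail to a quantitative digit-equidistribution statement about squares.
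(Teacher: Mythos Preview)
The statement you are addressing is stated in the paper as a \emph{Conjecture}, and the paper gives no proof of it. The only part established in the paper is the ``only if'' direction, which---exactly as you observe---is the contrapositive of Corollary~\ref{cor:squares}. The ``if'' direction is left entirely open.

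Your proposal is therefore not in conflict with the paper, and your own honest conclusion (that the general ``if'' direction likely lies beyond current techniques, reducing to a quantitative digit-equidistribution statement for squares) is precisely in line with the paper's treatment: the authors do not attempt a proof and present the statement only as a conjecture supported by computation. Your outline---finite verification for small $b$ together with a Stirling-based expectation heuristic for large $b$---is a reasonable explanation of why the conjecture is plausible, but it is not a proof, and you correctly flag the gap (the equidistribution step) as the genuine obstruction. There is nothing further to compare against in the paper.
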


\begin{corollary}
Let $m^2$ be a strict pandigital or a strict penholodigital square in base $b$.
If $b$ is an even squarefree number, then
$m\equiv 0 \pmod{b-1}$.
If $b$ is an odd squarefree number, then
$m\equiv (b-1)/2 \pmod{b-1}$.
\end{corollary}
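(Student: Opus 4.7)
The plan is to observe that this corollary is a direct combination of Theorems \ref{thm:squares} and \ref{thm:setA}, so no new machinery is needed. First I would invoke Theorem \ref{thm:squares}: the hypothesis that $m^2$ is a strict pandigital or strict penholodigital square in base $b$ forces $m \bmod (b-1)$ to lie in $A_b$. The only remaining task is to identify $A_b$ explicitly under the squarefree hypotheses on $b$.

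Next I would appeal to Theorem \ref{thm:setA} in the two relevant cases. If $b$ is even and squarefree, Theorem \ref{thm:setA} gives $A_b=\{0\}$, so any admissible $m$ must satisfy $m\equiv 0\pmod{b-1}$. If instead $b$ is odd and squarefree, Theorem \ref{thm:setA} gives $A_b=\{(b-1)/2\}$, forcing $m\equiv (b-1)/2\pmod{b-1}$. Each of these is the exact statement claimed in the corollary.

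Since both theorems are already in hand, the proof is essentially a two-line deduction; the only thing to verify is that the hypotheses of the corollary ($b$ squarefree, of fixed parity) are exactly the hypotheses under which Theorem \ref{thm:setA} produces a singleton $A_b$. There is no real obstacle to overcome — the entire content of the result was packaged into the two preceding theorems, and this corollary simply specializes their combination.
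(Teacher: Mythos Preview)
Your proposal is correct and matches the paper's approach exactly: the paper states this corollary as an immediate consequence of Theorems \ref{thm:squares} and \ref{thm:setA} without giving a separate proof, which is precisely the two-line deduction you outline.
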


The number of strict pandigital and strict penholodigital squares for each base $b$ are listed in OEIS sequences  \href{https://oeis.org/A258103}{A258103}
and  \href{https://oeis.org/A370950}{A370950} respectively.

\section{Pandigital and penholodigital oblong numbers} \label{sec:oblong}

Similarly to squares, for oblong (or pronic) numbers (i.e. numbers of the form $m(m+1)$), we have the following:
\begin{theorem}\label{thm:oblong}
Let $B_b$ be the set of numbers $m$ such that $m(m+1) \equiv b(b-1)/2 \pmod{b-1}$. If $n(n+1)$ is strict pandigital or strict penholodigital,
then $n\equiv m \pmod{b-1}$ for some $m\in B_b$.
\end{theorem}

The following result shows when $B_b$ is empty.
\begin{theorem}
$b \equiv 3 \pmod{4}$ if and only if $B_b=\emptyset$. 
If $b$ is even, then $0\in B_b$. If $b\equiv 1\pmod{4}$, then $(b-1)/2\in B_b$.
\end{theorem}
\begin{proof}
Suppose $b=2q$ is even, then $b(b-1)/2 = q(b-1) \equiv 0 \pmod{b-1}$ and thus $0\in B_b$.
Suppose $b=4q+1$, then $b-1=4q$ and $b(b-1)/2 = (4q+1)2q \equiv 2q \pmod{b-1}$. Since $2q(2q+1)\equiv 2q \pmod{b-1}$, this means that $(b-1)/2 = 2q \in B_b$.
Suppose $b\equiv 3 \pmod{4}$, i.e. $b = 2q+1$ for some odd number $q$. Then $b(b-1)/2 = 2q^2+q$. Since $b-1 = 2q$, this means that
$b(b-1)/2\equiv q \pmod{b-1}$. Since $q$ is odd, $m(m+1)$ is even and $b-1$ is even, this means that $m(m+1)\not\equiv q \pmod{b-1}$ and
thus $B_b=\emptyset$.
\end{proof}

\begin{corollary} \label{cor:oblong}
If $b \equiv 3 \pmod{4}$, then there are no strict pandigital nor strict penholodigital oblong numbers in base $b$.
\end{corollary}

\begin{conjecture}
Suppose $b>5$.
A strict pandigital oblong number and a strict penholodigital oblong number in base $b$ exists
if and only if $b\not\equiv 3\pmod{4}$.
\end{conjecture}

The roots of the smallest strict pandigital and strict penholodigital oblong number in base $b$ are given by OEIS sequence  \href{https://oeis.org/A381266}{A381266} and \href{https://oeis.org/A382050}{A382050} respectively.

\section{Pandigital and penholodigital primes} \label{sec:prime}
Since $s_b(n) = b(b-1)/2$, this means that $n\equiv 0 \pmod{b-1}$ if $b$ is even and
$n\equiv 0 \pmod{(b-1)/2}$ if $b$ is odd. This implies that there are no strict pandigital nor strict penholodigital prime numbers in base $b> 3$, i.e. a pandigital prime must be larger or equal to $\frac{b^b-b^2+b-1}{(b-1)^2}+b^b$ (i.e. the base $b$ representation is $10123....(b-1)$) and
a penholodigital prime must be larger or equal to $\frac{b^b-b^2+b-1}{(b-1)^2}+b^{b-1}$  (i.e. the base $b$ representation is $1123....(b-1)$). In other words, we have the following lower bounds:

\begin{theorem} \label{thm:prime}
Let $b>3$. If $n$ is a pandigital prime in base $b$, then $n\geq\frac{b^b-b^2+b-1}{(b-1)^2}+b^b$. If $n$ is a penholodigital prime in base $b$, then $n\geq  \frac{b^b-b^2+b-1}{(b-1)^2}+b^{b-1}$.
\end{theorem}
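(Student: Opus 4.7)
My plan is to build on the digit-sum argument sketched just before the statement. The key fact is that any pandigital or penholodigital number $n$ with $s_b(n) = b(b-1)/2$ satisfies $(b-1)\mid n$ when $b$ is even and $(b-1)/2 \mid n$ when $b$ is odd; for $b>3$ both divisors exceed $1$ and are strictly smaller than $n$, so $n$ is composite. Crucially, this applies not only to strict pandigital or strict penholodigital numbers but to any pandigital number whose repeated digits are all zeros, since zeros do not change the digit sum.

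For the pandigital bound, let $n$ be a pandigital prime. It must have at least $b$ digits, and the strict case ($b$ digits) is ruled out by the above, so $n$ has at least $b+1$ digits. If $n$ has strictly more than $b+1$ digits then $n \geq b^{b+1}$, which I would check easily exceeds $10123\ldots(b-1)_b$; so assume $n$ has exactly $b+1$ digits. Its digit multiset is then $\{0,1,\ldots,b-1\}$ together with one extra copy of some digit $d \in \{0,1,\ldots,b-1\}$. If $d=0$ the digit sum is still $b(b-1)/2$ and $n$ is composite. Otherwise $d \geq 1$, and the minimum rearrangement with a nonzero leading digit is obtained greedily: leading digit $1$, and the remaining $b$ digits placed in nondecreasing order. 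For $d=1$ this produces exactly $1,0,1,2,3,\ldots,b-1 = 10123\ldots(b-1)_b$; for $d \geq 2$ the third digit is at least $2$, so the result is strictly larger. This yields the pandigital bound.

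The penholodigital bound is analogous but slightly simpler, since the digit $0$ is forbidden. A strict penholodigital ($b-1$ digits) is composite, so a penholodigital prime has at least $b$ digits. If it has more than $b$ digits then $n \geq b^b$, which exceeds $1123\ldots(b-1)_b$. A $b$-digit penholodigital has multiset $\{1,\ldots,b-1\}$ augmented by one extra digit $d \in \{1,\ldots,b-1\}$; the extra-zero-padding escape is unavailable. The greedy minimum for $d=1$ is $1,1,2,3,\ldots,b-1 = 1123\ldots(b-1)_b$, and $d \geq 2$ gives something strictly larger.

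Finally, to identify the closed forms I would evaluate $\sum_{k=1}^{b-1} k\, b^{b-1-k} = \frac{b^b - b^2 + b - 1}{(b-1)^2}$ by a standard geometric-series manipulation; the bounds then follow by adding $b^b$ in the pandigital case (prepending the digits $1,0$ in base $b$) or $b^{b-1}$ in the penholodigital case (prepending the digit $1$). The main obstacle is making the greedy-minimum argument airtight for every choice of the extra digit $d$; once that is in hand, the digit-sum composite argument disposes of everything strictly below the bound.
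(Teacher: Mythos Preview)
Your proposal is correct and follows essentially the same approach as the paper, which derives the bound from the digit-sum congruence $s_b(n)\equiv n\pmod{b-1}$ and then identifies the base-$b$ strings $10123\cdots(b-1)$ and $1123\cdots(b-1)$ with the stated closed forms. In fact your write-up is more careful than the paper's: you explicitly dispose of the $(b+1)$-digit pandigital numbers whose repeated digit is $0$ (which still have digit sum $b(b-1)/2$ and hence are composite), a case the paper's terse ``i.e.'' glosses over even though it is needed to push the bound up from $10023\cdots(b-1)$ to $10123\cdots(b-1)$.
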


These lower bounds can be improved for bases of the form $b=4k+3$.

\begin{theorem} \label{thm:prime2}
If $b = 4k+3$ for $k>0$, then a pandigital prime in base $b$ is larger than or equal to $n\geq\frac{b^b-b^2+b-1}{(b-1)^2}+b^b+b^{b-2}$ and a penholodigital prime in base $b$ is larger than or equal to $n\geq\frac{b^b-b^2+b-1}{(b-1)^2}+b^{b-1}+b^{b-2}$.
\end{theorem}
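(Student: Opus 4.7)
The plan is to refine the digit-sum argument of Theorem \ref{thm:prime} by incorporating a parity condition. Any pandigital prime in base $b$ less than the claimed bound must have exactly $b+1$ digits (the strict-pandigital case is ruled out by Theorem \ref{thm:prime}'s argument, and a number with $\geq b+2$ digits would already exceed the bound); similarly a penholodigital candidate has exactly $b$ digits. Its digit multiset is then $\{0,1,\ldots,b-1\}\cup\{d\}$ or $\{1,\ldots,b-1\}\cup\{d\}$ for a unique ``extra'' repeated digit $d$, so $s_b(n)=b(b-1)/2+d$. Since $b$ is odd, $n\equiv s_b(n)\pmod 2$ and $n\equiv s_b(n)\pmod{b-1}$; and for $b=4k+3$ the quantity $b(b-1)/2=(4k+3)(2k+1)$ is odd, which is the key new ingredient.

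My first step would be to rule out $d\in\{0,1\}$. If $d$ is odd, then $s_b(n)$ is even, so $n$ is even and composite (handling $d=1$). If $d=0$ (possible only for pandigital), then $s_b(n)\equiv(b-1)/2\pmod{b-1}$, so the nontrivial divisor $(b-1)/2=2k+1>1$ divides $n$, forcing compositeness. For $d=2$ one verifies $\gcd(s_b(n),b-1)=\gcd(2k+3,\,2(2k+1))=1$, so this residue obstruction vanishes. The smallest arrangement with $d=2$ and a nonzero leading digit is obtained by placing $1$ first and the remaining multiset in ascending order, giving $10223\cdots(b-1)_b$ (pandigital) and $1223\cdots(b-1)_b$ (penholodigital); a direct expansion using $\frac{b^b-b^2+b-1}{(b-1)^2}=0123\cdots(b-1)_b$ and the fact that adding $b^{b-2}$ promotes the digit $1$ at position $b-2$ to a $2$ verifies that these coincide with the claimed lower bounds.

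The final step is a short lexicographic case analysis showing that no pandigital or penholodigital $n$ strictly less than the bound has $d\geq 2$. For pandigital, any $n<10223\cdots(b-1)_b$ must begin with $1,0$ (leading digit forced to $1$, and $d_2\geq 1$ would yield $n\geq b^b+b^{b-1}$, exceeding the bound). Then $d_3\leq 2$: choices $d_3\in\{0,1\}$ force $d\in\{0,1\}$ and make $n$ composite by step one, while $d_3=2$ recurses the same trichotomy onto $d_4$. Matching the prefix $1,0,2,2$ pins down $d=2$, and pandigitality then forces the remaining $b-3$ positions to contain $\{3,4,\ldots,b-1\}$ in the unique ascending arrangement, producing the bound itself (not strictly less). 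The penholodigital case is parallel, without the $d=0$ option, examining prefixes $1$ and $1,2,2$. The main obstacle is merely keeping this bookkeeping tidy; no deeper difficulty arises.
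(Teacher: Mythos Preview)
Your proposal is correct and follows essentially the same approach as the paper: both arguments compute the digit sum $s_b(n)=b(b-1)/2+d$ for the single repeated digit $d$, use the parity observation that $b(b-1)/2+1$ is even when $b=4k+3$ (equivalently, $b(b-1)/2$ is odd) together with $n\equiv s_b(n)\pmod 2$ to rule out $d=1$, and then identify $10223\cdots(b-1)_b$ and $1223\cdots(b-1)_b$ as the minimal admissible configurations. Your write-up is in fact more complete than the paper's terse proof, since you explicitly dispose of the $d=0$ case and supply the lexicographic verification that the paper leaves implicit.
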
 

\begin{proof}
For a pandigital prime, if $b=4k+3$, then $b(b-1)/2+1 = 2(4k^2+5k+2)$  and $b-1$ are both even. Thus if $s_b(n) =  b(b-1)/2+1$, then $n>2$ is even and thus not prime. Thus $s_b(n)\geq b(b-1)/2+2$ and thus
$n$ is larger than or equal to $10223....(n-1)$ in base $b$.

Similarly, for a penholodigital prime, $n$ is larger than or equal to $1223....(n-1)$ in base $b$.
\end{proof}

The smallest pandigital and penholodigital primes are listed in OEIS sequences  \href{https://oeis.org/A185122}{A185122} and  \href{https://oeis.org/A371194}{A371194} respectively. Numerical experiments suggest the following conjecture:
\begin{conjecture} 
For $b>3$, the smallest pandigital prime or penholodigital prime $n$ satisfy $s_b(n) = b(b-1)/2+2$ if $b$ is of the form $4k+3$ and  satisfy $s_b(n) = b(b-1)/2+1$ otherwise. 
\end{conjecture}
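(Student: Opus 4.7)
The lower-bound direction is already established: Theorem~\ref{thm:prime} shows any pandigital (resp.\ penholodigital) prime in base $b$ has $s_b(n) \geq b(b-1)/2 + 1$, and Theorem~\ref{thm:prime2} sharpens this to $s_b(n) \geq b(b-1)/2 + 2$ when $b \equiv 3 \pmod{4}$. What remains is the matching upper bound: some pandigital prime with digit sum exactly equal to the stated minimum must exist, and it must be the overall smallest.

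My plan is to split the argument into an ordering step and an existence step. The ordering step is essentially free. Every $(b+1)$-digit pandigital number is less than $b^{b+1}$, while every $(b+2)$-digit pandigital is at least $b^{b+1}$, so a prime among the $(b+1)$-digit class immediately beats every pandigital with more digits. Within the $(b+1)$-digit class, numbers split by their single repeated digit $d$ into sub-classes with digit sum $b(b-1)/2+d$, and a short check shows that the numerically smallest pandigital in each sub-class grows monotonically with $d$. Thus it suffices to exhibit a prime in the sub-class $d=1$ (or $d=2$ when $b=4k+3$) lying below the smallest element of the next sub-class. The penholodigital case is analogous, using $b$-digit representatives such as $1123\ldots(b-1)$ and $1223\ldots(b-1)$.

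The existence step is the main obstacle. We need to show that among the $(b+1)$-digit pandigitals with a repeated $1$, at least one lying below the smallest $d=2$ representative $10223\ldots(b-1)$ is prime, and similarly for the penholodigital version. Heuristically this is easy: the number of candidate permutations with leading nonzero digit grows like $b\cdot b!/2$, and by prime number theorem densities a positive proportion should be prime, so the conjecture is overwhelmingly plausible and can be verified computationally for each small base $b$. Rigorously, however, this is a sparse-set prime-existence problem in the same family as proving that primes exist with prescribed digit patterns---the flavor of problem attacked by Maynard's sieve for primes with restricted digits. I expect a complete proof to require (i) explicit computation for small $b$, and (ii) a sieve-theoretic argument along Maynard-style lines adapted to permutation classes of digit multisets, which I do not see how to carry out uniformly in $b$ by elementary means.
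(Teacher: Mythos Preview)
The statement you are attempting to prove is labelled a \emph{Conjecture} in the paper, not a theorem; the paper offers no proof and explicitly presents it as a hypothesis ``suggested by numerical experiments.'' There is therefore no proof in the paper to compare your attempt against.

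Your analysis correctly isolates where the difficulty lies. The lower-bound direction is indeed handled by Theorems~\ref{thm:prime} and~\ref{thm:prime2}, and you are right that what is missing is an unconditional existence result: for every $b>3$ one must produce a pandigital (resp.\ penholodigital) prime whose extra digit is $1$ (or $2$ when $b\equiv 3\pmod 4$) and which precedes every candidate from the higher sub-classes. Your own conclusion---that this is a sparse-set prime-existence problem in the spirit of Maynard's restricted-digit primes, not amenable to a uniform elementary argument---is exactly why the paper leaves it as a conjecture rather than a theorem. In short, your proposal is not a proof but a correct diagnosis of why no proof is currently available; that matches the paper's stance.

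One minor remark on your ordering step: finding a prime in the $d=1$ sub-class below the \emph{smallest} element of the $d=2$ sub-class is sufficient but stronger than what the conjecture asserts. The conjecture only requires that the smallest pandigital prime lie in the $d=1$ sub-class, which would also follow (for instance) from showing that the least prime in the $d=1$ class is smaller than the least prime in every $d\ge 2$ class. Your formulation is the cleanest sufficient condition, but it is worth noting that the conjecture could in principle hold even if that stronger condition fails for some $b$.
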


\section{Subpandigital and subpenholodigital numbers}
We can also consider numbers whose digits in base $b$ include all (nonzero) digits up to $b-2$.

\begin{definition}
A number $n$ is a {\em subpandigital} number in base $b$ if $n$ expressed as a base $b$ number does not contain the digit $b-1$ and contains each of the $b-1$ digits $0,1,\cdots, b-2$ at least once. A number $n$ is a {\em strict subpandigital} number in base $b$ if $n$ expressed as a base $b$ number does not contain the digit $b-1$ and contains each of the $b-1$ digits $0,1,\cdots, b-2$ exactly once.
\end{definition}
\begin{definition}
A number $n$ is a {\em subpenholodigital} number in base $b$ if $n$ expressed as a base $b$ number does not contain the zero digit nor the digit $b-1$ and contains each of the $b-2$ digits $1,\cdots, b-2$ at least once. A number $n$ is a {\em strict subpenholodigital number} in base $b$ if $n$ expressed as a base $b$ number does not contain the zero digit nor the digit $b-1$ and contains each of the $b-2$ digits $1,\cdots, b-2$ exactly once.
\end{definition}

For example, in base $10$, $120345687$ is a strict subpandigital number as it contains all digits except $9$ exactly once and $87654123$ is a strict subpenholodigital number as it contains all nonzero digits except $9$ exactly once.
Since there are no subpenholodigital number in base $2$ and the only subpandigital number in base $2$ is $0$, we only consider bases $b>2$ in this section. 
As $s_b(n) = (b-1)(b-2)/2 = b(b-1)/2 - (b-1)$ for a strict subpandigital or a strict subpenholodigital number $n$ and thus $(b-1)(b-2)/2 \equiv b(b-1)/2 \pmod{b-1}$, we have the following analog result to Theorem \ref{thm:squares}:
\begin{theorem} \label{thm:squares:sub}
Let $A_b$ be as defined in Theorem \ref{thm:squares}.
If $n^2$ is a strict subpandigital or a strict subpenholodigital square, then $n\equiv m \pmod{b-1}$ for some $m\in A_b$. 
\end{theorem}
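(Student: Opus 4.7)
The plan is to imitate, essentially verbatim, the digit-sum argument underlying Theorem \ref{thm:squares}. First, I would set $n = m^2$ and apply the observation from the start of Section 2 that $s_b(n) \equiv n \pmod{b-1}$, which follows from $b \equiv 1 \pmod{b-1}$. This yields $m^2 \equiv s_b(m^2) \pmod{b-1}$.

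Second, I would substitute in the value of $s_b(m^2)$ dictated by the hypothesis. Because the base-$b$ digits of $m^2$ are exactly $0,1,\dots,b-2$ in the strict subpandigital case and exactly $1,2,\dots,b-2$ in the strict subpenholodigital case, the digit sum equals $(b-1)(b-2)/2$ in either case, so
\[
m^2 \equiv \frac{(b-1)(b-2)}{2} \pmod{b-1}.
\]

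Third, I would invoke the identity $(b-1)(b-2)/2 = b(b-1)/2 - (b-1)$, already noted in the paragraph preceding the theorem, to replace the right-hand side by $b(b-1)/2$ modulo $b-1$. The result is $m^2 \equiv b(b-1)/2 \pmod{b-1}$, which is by definition the statement that $m \bmod (b-1)$ lies in $A_b$.

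There is no real obstacle here: the claim is the direct sub-digital analogue of Theorem \ref{thm:squares}, and every ingredient has already been assembled in the text immediately preceding the theorem statement. The only minor sanity check is that $(b-1)(b-2)/2$ is an integer, which is automatic since $b-1$ and $b-2$ are consecutive. I would expect the written proof to occupy no more than three lines.
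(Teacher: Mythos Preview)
Your proposal is correct and follows exactly the paper's approach: the paper does not even supply a formal proof, but simply notes in the sentence preceding the theorem that $s_b(n) = (b-1)(b-2)/2 = b(b-1)/2 - (b-1) \equiv b(b-1)/2 \pmod{b-1}$, which is precisely your third step. Your write-up is just a slightly more expanded version of that one-line remark (with the harmless relabeling of the square root as $m$ rather than $n$).
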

\begin{corollary} \label{cor:squares':sub}
If $b$ is odd and $b-1$ has an even $2$-adic valuation, then there are no strict subpandigital nor strict subpenholodigital squares in base $b$.
\end{corollary}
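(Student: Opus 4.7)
The plan is to combine Theorem \ref{thm:squares:sub} with Theorem \ref{thm:setA} in exactly the same way that Corollary \ref{cor:squares} is obtained from Theorems \ref{thm:squares} and \ref{thm:setA}. The key observation, already established in Theorem \ref{thm:squares:sub}, is that the set $A_b$ of modular square roots of $b(b-1)/2$ modulo $b-1$ is the same obstruction set for the subpandigital/subpenholodigital case as for the pandigital/penholodigital case; this works because $(b-1)(b-2)/2$ and $b(b-1)/2$ are congruent modulo $b-1$, so the digit sum condition produces the identical congruence requirement on $n$.

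First I would invoke Theorem \ref{thm:squares:sub}: if $n^2$ is a strict subpandigital or strict subpenholodigital square in base $b$, then $n \equiv m \pmod{b-1}$ for some $m \in A_b$. Next I would apply Theorem \ref{thm:setA}: under the hypothesis that $b$ is odd and $b-1$ has an even $2$-adic valuation, $A_b = \emptyset$. These two facts together are immediately contradictory unless no such $n$ exists.

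There is essentially no obstacle here; the corollary is a direct consequence of two previously proved results and the proof is a one-line syllogism. The only thing to double-check is that the statement of Theorem \ref{thm:squares:sub} genuinely uses the same set $A_b$ (it does, by the congruence $(b-1)(b-2)/2 \equiv b(b-1)/2 \pmod{b-1}$ noted just before that theorem), so the conclusion transfers verbatim from the pandigital setting to the subpandigital setting.
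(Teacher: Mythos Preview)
Your proposal is correct and matches the paper's approach exactly: the corollary is stated in the paper as an immediate consequence of Theorem~\ref{thm:squares:sub} and Theorem~\ref{thm:setA}, with no additional argument given, and that is precisely the one-line syllogism you describe.
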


Similarly, we conjecture the following:
\begin{conjecture}
Suppose $b>7$.
A strict subpandigital square and a strict subpenholodigital square in base $b$ exists
if and only if $b$ is even or $b-1$ has an odd $2$-adic valuation.
\end{conjecture}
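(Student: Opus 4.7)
My plan is to dispatch the ``only if'' direction by direct appeal to Corollary \ref{cor:squares':sub}, which already rules out strict subpandigital and strict subpenholodigital squares when $b$ is odd and $b-1$ has even $2$-adic valuation. The substance lies in the ``if'' direction, and there my plan is to combine a density heuristic with an attempted explicit construction.

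For the existence direction, I would first fix $b$ with $b$ even or $b-1$ of odd $2$-adic valuation, pick $m\in A_b$ (nonempty by Theorem \ref{thm:setA}), and count. The strict subpandigital integers in base $b$ are permutations of $\{0,1,\dots,b-2\}$ with nonzero leading digit, giving $(b-1)!-(b-2)!$ integers, all lying in $[b^{b-2},b^{b-1})$. The squares $n^2$ in this range with $n\equiv m\pmod{b-1}$ number approximately $b^{(b-1)/2}/(b-1)$. If I model ``being a square in the right residue class'' and ``being strict subpandigital'' as statistically independent, the expected number of strict subpandigital squares is of order
\[\frac{(b-2)!}{b^{(b+1)/2}},\]
which by Stirling grows like $(b/e^2)^{(b-2)/2}$ up to polynomial factors. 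This tends to infinity exactly when $b>e^2\approx 7.39$, matching the hypothesis $b>7$ and providing strong heuristic support. The analogous count for strict subpenholodigital squares (permutations of $\{1,\dots,b-2\}$, lying in $[b^{b-3},b^{b-2})$) yields the same threshold.

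To turn this into a proof I would try, in increasing order of ambition: (i) direct computer verification for each $b$ up to some bound, extending the OEIS data in A258103 and A370950; (ii) an explicit parametric family of integers $n\equiv m\pmod{b-1}$ whose squares realize the required digit multiset, perhaps by adapting identities known to produce pandigital squares in base $10$; (iii) an analytic attack in the spirit of Mauduit--Rivat, using exponential-sum estimates on squares to establish equidistribution of squares across digit-constrained sets.

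The principal obstacle is that no current technique produces squares with a prescribed digit multiset in a general base. Explicit base-$10$ identities do not obviously transfer, and a Mauduit--Rivat style argument would need to control the entire digit multiset (a high-dimensional constraint) rather than only the digit sum (a single linear constraint), which is well beyond the present state of the art. I therefore expect the conjecture to remain genuinely open, with the heuristic count above and finite computational verification as the main available evidence.
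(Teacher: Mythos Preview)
The paper does not prove this statement: it is labelled a \emph{conjecture} and is offered without argument beyond the adjacent Corollary~\ref{cor:squares':sub}, which settles only the ``only if'' direction. Your proposal correctly isolates that the ``only if'' half is already done by that corollary, and this matches the paper exactly.

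For the ``if'' direction there is nothing in the paper to compare against. Your density heuristic and the three-tier research plan (finite verification, explicit families, Mauduit--Rivat-style exponential sums) are reasonable and the threshold $b>e^2$ you extract is a nice sanity check for the hypothesis $b>7$, but---as you yourself conclude---none of this is a proof. So your write-up is not wrong so much as it is a discussion of why the conjecture is plausible and hard, which is the honest status of the problem; just be clear that you are not claiming to have proved the ``if'' direction, because neither you nor the paper has.
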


\begin{corollary}
Let $m^2$ be a strict subpandigital or a strict subpenholodigital square in base $b$.
If $b$ is an even squarefree number, then
$m\equiv 0 \pmod{b-1}$.
If $b$ is an odd square free number, then
$m\equiv (b-1)/2 \pmod{b-1}$.
\end{corollary}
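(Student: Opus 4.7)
The plan is to derive this corollary as an immediate consequence of Theorem \ref{thm:squares:sub} together with the explicit description of $A_b$ in the squarefree cases provided by Theorem \ref{thm:setA}. Nothing new needs to be established; the work is simply to unfold the two statements in order.

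First I would invoke Theorem \ref{thm:squares:sub}: since $m^2$ is assumed to be a strict subpandigital or strict subpenholodigital square in base $b$, we immediately obtain that $m \equiv m' \pmod{b-1}$ for some $m' \in A_b$, where $A_b$ is the set of modular square roots of $b(b-1)/2$ modulo $b-1$ introduced in Theorem \ref{thm:squares}. This reduces the problem to identifying $A_b$ in each case.

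Next I would split into the two squarefree cases and quote Theorem \ref{thm:setA}. If $b$ is an even squarefree number, Theorem \ref{thm:setA} gives $A_b = \{0\}$, so the congruence above forces $m \equiv 0 \pmod{b-1}$. If $b$ is an odd squarefree number, then $b-1$ is even so $(b-1)/2$ is a well-defined integer, and Theorem \ref{thm:setA} gives $A_b = \{(b-1)/2\}$; the congruence then forces $m \equiv (b-1)/2 \pmod{b-1}$.

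There is no real obstacle here, only a bookkeeping point: one should make sure that the hypotheses needed by Theorem \ref{thm:setA} in each branch are satisfied by the corresponding hypothesis of the corollary, and that $A_b$ is being applied for the same $b$ in both theorems. Once that is checked, the two cases combine to yield exactly the claimed congruences, completing the proof.
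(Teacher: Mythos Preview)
Your proposal is correct and matches the paper's approach exactly: the paper presents this corollary (like its pandigital analog) as an immediate consequence of Theorem~\ref{thm:squares:sub} together with the explicit description of $A_b$ in Theorem~\ref{thm:setA}, without writing out a separate proof. Your unfolding of the two cases is precisely the argument the paper has in mind.
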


For oblong numbers, we have the following results:

\begin{theorem} \label{thm:oblong:sub}
Let $B_b$ be as defined in Theorem \ref{thm:oblong}.
If $n(n+1)$ is a strict subpandigital or a strict subpenholodigital oblong number, then $n\equiv m \pmod{b-1}$ for some $m\in B_b$. 
\end{theorem}
\begin{corollary} \label{cor:oblong:sub}
If $b\equiv 3 \pmod{4}$, then there are no strict subpandigital nor strict subpenholodigital oblong numbers in base $b$.
\end{corollary}

Similarly, we conjecture the following:
\begin{conjecture}
Suppose $b>4$.
A strict subpandigital oblong number and a strict subpenholodigital oblong number in base $b$ exists
if and only if $b\not\equiv 3\pmod{4}$.
\end{conjecture}

The roots of the smallest subpandigital and subpenholodigital oblong number in base $b$ are listed in OEIS sequences  \href{https://oeis.org/A382054}{A382054} and  \href{https://oeis.org/A382055}{A382055} respectively.

For primes, there is an analog to Theorems \ref{thm:prime}-\ref{thm:prime2}:

\begin{theorem} \label{thm:prime:sub}
Let $b>3$. A subpandigital prime in base $b$ must be larger than or equal to $\frac{b^{b-1}-b}{(b-1)^2}+b^{b-1}$ and any subpenholodigital prime must be larger than or equal to
$\frac{b^{b-1}-b}{(b-1)^2}+b^{b-2}$.
\end{theorem}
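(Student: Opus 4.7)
The plan is to mimic the proofs of Theorems~\ref{thm:prime} and~\ref{thm:prime2}, replacing the strict pandigital/penholodigital digit sum $b(b-1)/2$ by the strict subpandigital/subpenholodigital digit sum $(b-1)(b-2)/2$ and reapplying the identity $n\equiv s_b(n)\pmod{b-1}$.

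First I would rule out strict subpandigital and strict subpenholodigital primes for $b>3$. Reduction modulo $b-1$ gives $n\equiv 0\pmod{b-1}$ when $b$ is even, since $(b-1)(b-2)/2$ is a multiple of $b-1$ in that case. When $b$ is odd, $(b-1)/2$ is an integer divisor of $b-1$, so $n\equiv s_b(n)\pmod{(b-1)/2}$, and the identity $(b-1)(b-2)/2 = \tfrac{b-1}{2}(b-2)$ shows that $n\equiv 0\pmod{(b-1)/2}$. In both regimes the resulting divisor exceeds $1$ for $b>3$, so the strict case is composite.

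Next I would identify the smallest enlargement of the digit multiset that is not automatically ruled out by this digit-sum obstruction. Because adding a zero leaves $s_b(n)$ unchanged, the extra digit $d$ must be nonzero; the divisibility obstruction further forces $d\not\equiv 0\pmod{b-1}$ when $b$ is even and $d\not\equiv 0\pmod{(b-1)/2}$ when $b$ is odd, and in both cases $d=1$ is the smallest admissible value. The augmented multisets are $\{0,1,1,2,\ldots,b-2\}$ for subpandigital (length $b$) and $\{1,1,2,\ldots,b-2\}$ for subpenholodigital (length $b-1$), and the minimum-value arrangements with a nonzero leading digit are $10123\cdots(b-2)$ and $1123\cdots(b-2)$ in base $b$, respectively.

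Finally I would evaluate these base-$b$ integers in closed form. Both decompose as $b^{c}+\sum_{j=0}^{b-3}(b-2-j)b^{j}$, with $c=b-1$ for the subpandigital case and $c=b-2$ for the subpenholodigital case. Splitting the tail into $(b-2)\sum_{j=0}^{b-3}b^{j}$ and $\sum_{j=0}^{b-3}jb^{j}$ and applying the geometric series together with the standard identity $\sum_{j=1}^{n}jb^{j}=(nb^{n+2}-(n+1)b^{n+1}+b)/(b-1)^2$ at $n=b-3$ collapses the tail to a common rational expression; adding the appropriate leading power $b^{b-1}$ or $b^{b-2}$ then yields the two stated lower bounds. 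The main obstacle is algebraic bookkeeping—collecting and telescoping the polynomial numerator after combining the geometric and derivative-of-geometric sums—rather than anything conceptual, since the digit-sum argument is essentially identical to the pandigital case.
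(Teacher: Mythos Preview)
Your proposal is correct and follows essentially the same approach as the paper: rule out the strict case via the congruence $n\equiv s_b(n)\pmod{b-1}$, observe that an extra $0$ leaves the digit sum unchanged and is therefore also ruled out, and conclude that the smallest surviving candidates are $10123\cdots(b-2)$ and $1123\cdots(b-2)$. The paper's own proof is actually much terser than yours---it simply asserts that the stated closed forms equal those two base-$b$ strings and relies on the surrounding text (the remark that $(b-1)(b-2)/2\equiv b(b-1)/2\pmod{b-1}$) for the divisibility step---so your explicit exclusion of $d=0$ and your algebraic verification of the closed form are more detail than the paper supplies, but not a different method.
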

\begin{proof}
Follows from the fact that $\frac{b^{b-1}-b}{(b-1)^2}+b^{b-1}$ can be written as $10123...(b-2)$ in base $b$ and
$\frac{b^{b-1}-b}{(b-1)^2}+b^{b-2}$ can be written as $1123...(b-2)$ in base $b$. 
\end{proof}

Similarly, these lower bounds can be improved for bases of the form $b=4k+3$.

\begin{theorem} \label{thm:prime2:sub}
If $b = 4k+3$ for $k>0$, then the smallest subpandigital prime in base $b$ is larger than or equal to $\frac{b^{b-1}-b}{(b-1)^2}+b^{b-1}+b^{b-3}$ and the smallest subpenholodigital prime in base $b$ is larger than or equal to $\frac{b^{b-1}-b}{(b-1)^2}+b^{b-2}+b^{b-3}$.
\end{theorem}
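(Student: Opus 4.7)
The plan is to adapt the proof of Theorem~\ref{thm:prime2} to the subpandigital and subpenholodigital settings. Both minimizing representations of Theorem~\ref{thm:prime:sub}, namely $10123\ldots(b-2)$ and $1123\ldots(b-2)$, have digit sum $(b-2)(b-1)/2 + 1$. For $b = 4k+3$, I would compute
\[
(b-2)(b-1)/2 + 1 = (4k+1)(2k+1) + 1 = 2(4k^2 + 3k + 1),
\]
which is even. Since $b-1 = 4k+2$ is also even, the congruence $s_b(n) \equiv n \pmod{b-1}$ forces $n$ itself to be even, so any $n > 2$ with this digit sum is composite. Consequently, a subpandigital or subpenholodigital prime in base $b$ must satisfy $s_b(n) \geq (b-2)(b-1)/2 + 2$.

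The next step is to identify the smallest representation achieving this larger digit sum. A $b$-digit subpandigital representation contains each of $0,1,\ldots,b-2$ at least once, so exactly one digit $x$ is repeated, which gives $s_b(n) = (b-2)(b-1)/2 + x$; hence $x = 2$. The lexicographically smallest arrangement of the multiset $\{0,1,2,2,3,\ldots,b-2\}$ with a nonzero leading digit is $10223\ldots(b-2)$, which exceeds $10123\ldots(b-2)$ by exactly $b^{b-3}$ (only the third leading digit changes from $1$ to $2$). This yields the claimed bound $\frac{b^{b-1}-b}{(b-1)^2}+b^{b-1}+b^{b-3}$. The subpenholodigital case is symmetric: in a $(b-1)$-digit representation whose digits come from $\{1,\ldots,b-2\}$ the repeated digit must again be $2$, and the minimal arrangement $1223\ldots(b-2)$ equals $\frac{b^{b-1}-b}{(b-1)^2}+b^{b-2}+b^{b-3}$.

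There is no serious obstacle. The one thing worth a sanity check is that representations with strictly more digits cannot undercut these bounds; this is immediate because any subpandigital with more than $b$ digits exceeds $b^b$, and any subpenholodigital with more than $b-1$ digits exceeds $b^{b-1}$, each comfortably above the candidate values. The heart of the argument is the parity identity $(b-2)(b-1)/2+1 = 2(4k^2+3k+1)$, exactly analogous to the identity used in Theorem~\ref{thm:prime2}; everything else is combinatorial minimization over digit multisets.
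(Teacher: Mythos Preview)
Your proof is correct and follows essentially the same approach as the paper: both rely on the parity identity $(b-2)(b-1)/2+1 = 2(4k^2+3k+1)$ together with $s_b(n)\equiv n\pmod{b-1}$ to force $s_b(n)\geq (b-2)(b-1)/2+2$, and then identify $10223\ldots(b-2)$ and $1223\ldots(b-2)$ as the minimal admissible representations. Your version is simply more explicit about why these particular digit arrangements are minimal and why longer representations cannot undercut them, details the paper leaves implicit.
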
 

\begin{proof}
For a subpandigital prime, if $b=4k+3$, then $(b-2)(b-1)/2+1 = 2(4k^2+3k+1)$  and $b-1$ are both even. Thus if $s_b(n) =  (b-2)(b-1)/2+1$, then $n>2$ is even and thus not prime. Thus $s_b(n)\geq (b-2)(b-1)/2+2$ and thus
$n$ is larger than or equal to $10223....(n-2)$ in base $b$ which is equal to $\frac{b^{b-1}-b}{(b-1)^2}+b^{b-1}+b^{b-3}$ 

Similarly, for a subpenholodigital prime, $n$ is larger than or equal to $1223....(n-2)$ in base $b$ which is equal to $\frac{b^{b-1}-b}{(b-1)^2}+b^{b-2}+b^{b-3}$.
\end{proof}

Table \ref{tbl:subpprime} shows the smallest subpandigital primes (OEIS sequence  \href{https://oeis.org/A371511}{A371511}) and subpenholodigital primes (OEIS sequence \href{https://oeis.org/A371512}{A371512}) for various bases. This table suggests that, similar to Section \ref{sec:prime}, the following conjecture:
\begin{conjecture} For $b>4$ the smallest subpandigital prime or smallest subpenholodigital prime $n$ satisfy $s_b(n) = (b-2)(b-1)/2+2$ for $b$ of the form $4k+3$ and satisfy $s_b(n) = (b-2)(b-1)/2+1$ otherwise. 
\end{conjecture}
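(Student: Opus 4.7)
The plan is to split the conjecture into a lower bound on $s_b(n)$ (which cannot be decreased) and an upper bound (some prime attains it), treating each separately. The lower bound is essentially already in place: any subpandigital or subpenholodigital $n$ satisfies $s_b(n)\geq(b-2)(b-1)/2$, with equality only in the strict case, where the congruence $n\equiv s_b(n)\pmod{b-1}$ forces $n$ to be divisible by $b-1$ (if $b$ is even) or by $(b-1)/2$ (if $b$ is odd), hence composite for $b>4$. When $b=4k+3$, the proof of Theorem~\ref{thm:prime2:sub} further rules out $s_b(n)=(b-2)(b-1)/2+1$: this quantity is the odd number $(4k+1)(2k+1)$ plus $1$ and is therefore even, while $b-1=4k+2$ is also even, so the same congruence forces $n$ to be even. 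Identical computations handle the subpenholodigital case, so in every case the claimed $s_b(n)$ is truly the minimum possible digit sum of a prime.

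For the upper bound I would argue by counting plus heuristic, then attempt to ground the heuristic via a sieve. In the subpandigital case with $s_b(n)=(b-2)(b-1)/2+1$, for instance, the candidates must have at least $b$ base-$b$ digits and, minimally, are the permutations of the multiset $\{0,1,1,2,\dots,b-2\}$ with nonzero leading digit, giving $(b-1)(b-1)!/2$ candidates in an interval of length roughly $b^b$. Analogous counts apply in the three remaining cases (the $+2$ digit-sum variants and the subpenholodigital versions). A Hardy--Littlewood heuristic then predicts on the order of $(b-1)!/\log b$ primes among these candidates, enormous already for moderate $b$. To turn this prediction into a theorem, one could try a Selberg or large-sieve upper bound removing obvious small-prime obstructions, combined with a lower-bound sieve or circle-method input to show the surviving set is nonempty.

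The main obstacle is precisely this existence step. The candidate set has density roughly $(b-1)!/b^b$ in its ambient interval, far too thin for classical equidistribution or prime-counting machinery to apply off the shelf, and its permutation structure does not match Maynard's framework for primes with restricted digits. A complete proof would therefore likely combine a direct computer verification for small $b$ with a genuinely new analytic argument for large $b$; absent such a tool the statement must remain a conjecture, though the heuristic count and the tabulated numerical evidence make it very plausible.
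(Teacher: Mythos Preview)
The statement you were asked to prove is labelled a \emph{Conjecture} in the paper, and the paper offers no proof of it whatsoever; it merely states the claim and points to the numerical data in Table~\ref{tbl:subpprime} as evidence. There is therefore no ``paper's own proof'' to compare against.

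That said, your write-up is accurate in its self-assessment. Your lower-bound half is genuinely a proof and matches exactly what the paper \emph{does} establish: the impossibility of $s_b(n)=(b-2)(b-1)/2$ for a prime is the content of Theorem~\ref{thm:prime:sub}, and the further exclusion of $s_b(n)=(b-2)(b-1)/2+1$ when $b=4k+3$ is precisely Theorem~\ref{thm:prime2:sub}. Your derivations of these facts are correct and agree with the paper's arguments.

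Your upper-bound half, as you yourself say, is a heuristic and not a proof. The Hardy--Littlewood count and the remark that the candidate set is too sparse for existing machinery (Maynard's restricted-digit primes, standard sieves) are reasonable commentary, but they do not close the gap, and you rightly conclude that the statement ``must remain a conjecture.'' That is also the paper's position. So there is no error in your proposal, but there is also no proof---by you or by the paper---of the full statement.
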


\begin{landscape}
\begin{table}
\begin{center}
\begin{tabular}{|r || r r | r r|}
\hline
base $b$ & smallest subpandigital prime & written in base $b$ & smallest subpenholodigital prime & written in base $b$\\
\hline\hline
3 & 3 & 10 & 13 & 111 \\
4 & 73 & 1021 & 37 & 211 \\
5 & 683 & 10213 & 163 & 1123 \\
6 & 8521 & 103241 & 1861 & 12341 \\
7 & 123323 & 1022354 & 22481 & 122354 \\
8 & 2140069 & 10123645 & 304949 & 1123465 \\
9 & 43720693 & 101236457 & 5455573 & 11234567 \\
10 & 1012356487 & 1012356487 & 112345687 & 112345687 \\
11 & 26411157737 & 10223456798 & 2831681057 & 1223456987 \\
12 & 749149003087 & 10123459a867 & 68057976031 & 1123458a967 \\
13 & 23459877380431 & 1012345678a9b & 1953952652167 & 112345678ba9 \\
14 & 798411310382011 & 1012345678c9ab & 61390449569437 & 11234567a8bc9 \\
15 & 29471615863458281 & 1022345678a9cdb & 2224884906436873 & 122345678acb9d \\
16 & 1158045600182881261 & 10123456789acbed & 77181689614101181 & 1123456789ceabd \\
17 & 48851274656431280857 & 10123456789acdebf & 3052505832274232281 & 1123456789acebfd \\
18 & 2193475267557861578041 & 10123456789abcefgd & 129003238915759600789 & 1123456789abfcegd \\
19 & 104737172422274885174411 & 10223456789abcedfhg & 6090208982148446231753 & 1223456789abchfedg \\
20 & 5257403213296398892278377 & 10123456789abcdgefih & 276667213296398892309917 & 1123456789abcdgiefh \\
\hline
\end{tabular}
\end{center}
\caption{Smallest subpandigital and subpenholodigital primes.}
\label{tbl:subpprime}
\end{table}
\end{landscape}

\section{Conclusions}
We study properties of pandigital, penholodigital, subpandigital and subpenholodigital numbers in various number bases and give conditions and bounds on when they intersect with the set of prime numbers, the set of square numbers and the set of oblong numbers.

\end{document}